\documentclass[letterpaper,11pt]{amsart}
\usepackage[margin=1.8cm]{geometry}
\usepackage{amsmath,bm}
\usepackage{calc, comment}
\usepackage[dvipsnames]{xcolor}
\usepackage{accents}
\usepackage[all,cmtip]{xy}                      %

\usepackage{tikz}
\usepackage{soul}

\CompileMatrices                            

\UseTips                                    

\input xypic
\usepackage[bookmarks=true]{hyperref}       

\usepackage{amssymb,latexsym,amsmath,amscd}
\usepackage{xspace}
\usepackage{color}
\usepackage{graphicx}
\usepackage{dsfont}
\usepackage{enumitem}
\usepackage{hyperref}
\hypersetup{
	colorlinks,
	linkcolor={MidnightBlue},
	citecolor={Green}
}
\usepackage[nameinlink,noabbrev]{cleveref}
\usepackage[normalem]{ulem}
\usepackage{centernot} 

\reversemarginpar

\usepackage{listings}
\lstdefinelanguage{code}{
basicstyle=\small\ttfamily,
alsoletter=",
classoffset=1,
keywords={gb, eliminate, saturate, diff, degree, flatten, apply, tensor, product},
keywordstyle={\color{teal}},
classoffset=2,
morekeywords={from, to, list, terms, toList, entries, for, end, if, return},
keywordstyle={\color{blue}},
classoffset=3,
morekeywords={QQ},
classoffset=4,
morekeywords={ideal, matrix, gens},
keywordstyle={\color{teal}},
xleftmargin=1.5cm,
xrightmargin=1em,
columns=fullflexible,
keepspaces=true,
stepnumber=1,
numbers=none,
captionpos=b,
showspaces=false,
frame=none
}
\theoremstyle{plain}
\newtheorem{theorem}{Theorem}[section]
\newtheorem*{theorem*}{Theorem}
\newtheorem{proposition}[theorem]{Proposition}

\newtheorem{lemma}[theorem]{Lemma}

\theoremstyle{definition}

\newtheorem{remark}[theorem]{Remark}

\newcommand{\enm}[1]{\ensuremath{#1}}          %
\newcommand{\tv}{\mathcal{T}}
\newcommand{\PP}{\enm{\mathbb{P}}}

\newcommand{\Sing}{\enm{\mathrm{Sing}}}

\newcommand{\C}{\mathbb{C}}
\newcommand{\ed}{\mathrm{exp dim}}

\title{Some Taylor varieties with null Hessian} 
\date{\today}

\author[WINE-5 Taylor Varieties group]{Thais Gomes Ribeiro \and Elena Guardo \and Manuela Muzika Dizdarevi\'c \and Maryam Nowroozi \and Pierpaola Santarsiero \and Paola Supino }

\newcommand{\Addresses}{{
  \bigskip
  \footnotesize

\textsc{Thais Gomes Ribeiro, School of Mathematics, University of Birmingham,  Ring Road N B152TT, UK } \par\nopagebreak
\textit{E-mail address}: \email{txg306@student.bham.ac.uk}

\medskip

\textsc{Elena Guardo, Dipartimento di Matematica e Informatica, Università di Catania, Viale A. Doria 6, 95125,
Catania, Italy}\par\nopagebreak
\textit{E-mail address}: \email{guardo@dmi.unict.it}

\medskip

\textsc{Manuela Muzika Dizdarevi\'c, Faculty of Science, Department of Mathematics, University of Sarajevo, Bosnia}\par\nopagebreak
\textit{E-mail address}: \email{manuela.md@pmf.unsa.ba}

\medskip
\textsc{Maryam Nowroozi, Warwick Mathematics Institute, University of Warwick, CV4 7AL, UK}\par\nopagebreak
\textit{E-mail address}: \email{maryam.nowroozi@warwick.ac.uk}

\medskip

 \textsc{Pierpaola Santarsiero, Dipartimento di Ingegneria Industriale e Scienze Matematiche, Universit\`a Politecnica delle Marche, Via Brecce Bianche
I-60131 Ancona, Italy}\par\nopagebreak
  \textit{E-mail address}: \email{p.santarsiero@staff.univpm.it}

\medskip 

\textsc{Paola Supino, Dipartimento di Matematica e Fisica, Università  Roma Tre, Largo S. L. Murialdo 1, 00146 Roma, Italy}\par\nopagebreak
  \textit{E-mail address}: \email{psupino@uniroma3.it}
}}
\date{}
\keywords{Hypersurfaces, vanishing Hessian, Taylor varieties}

\subjclass[2020]{Primary: 14J70. Secondary: 14N05, 15A15}
\begin{document}
\begin{abstract}
 Taylor varieties $\tv^n_{d,e,m}$  arise from  Taylor expansion of rational functions in $n$ variables. Among them, we look for non-defective hypersurfaces.  We prove that the cases  $n=2$ and $m=d+2$   give   new examples of hypersurfaces with identically null Hessian.  
\end{abstract}
\maketitle
 
\section{Introduction}
Let $f \in \mathbb{C}[z_0,\dots,z_N]$ be a homogeneous prime polynomial of degree $d>1$ and $X=V(f) \subset \mathbb{P}^N$ the degree $d$ irreducible hypersurface determined by $f.$ The Hessian matrix of $X$ is defined as \begin{align*}
H_X=\left(\frac{\partial^2 f}{\partial z_i \partial z_j}\right)_{i,j=0,\dots,N} 
\end{align*} and the determinant $h_f=h_X=\det(H_X)$ is called the \emph{Hessian determinant} or \emph{Hessian} of $X.$

The Hessian determinant has classically been used to reveal some geometric properties of $X\subset \mathbb{P}^N$. In fact, it is identically zero if and only if the partial derivatives $f_0;\dots;f_N$ are algebraically dependent, i.e. if and only if there is some nonzero polynomial $g(z_0,\dots,z_N) \in
\mathbb{C}[z_0,\dots,z_N]$ such that $g(\frac{\partial f}{\partial z_0},\dots,\frac{\partial f}{\partial z_N})= 0$. The simplest example is when  $X$ is a cone, for which the derivatives are linearly dependent.  More  generally, the vanishing of the Hessian implies that the tangent spaces of the hypersurface $X$  don't vary as much as expected, as  $X$ is ``flat" in at least one direction. In other words, the smooth points of such an $X$ represent a  generalization of the flex points of plane curves.
To be more precise,  suppose that $X$ is reduced projective variety,  and consider the Gauss map \[
\gamma: X \dashrightarrow (\mathbb{P}^N)^*
\] which associates to any smooth point $p\in X$ its tangent hyperplane in the dual space $\mathbb{P}^{N^*}$. The closure of the image of $\gamma$ is the dual variety $X^*$ of $X$. If $X=V(f)$ is a reduced hypersurface of degree $d>1$, then the Gauss map is given by 
\[
\gamma(p) = \left[ \frac{\partial f}{\partial z_0}(p) , \frac{\partial f}{\partial z_1}(p) , \dots , \frac{\partial f}{\partial z_N}(p) \right]
\]  The Hessian is the Jacobian matrix of the Gaussian map, and describes the differential map \[
d\gamma_p: T_p X \to T_{\gamma(p)} (\mathbb{P}^N)^*.
\]
 When the Hessian vanishes at $p$, that is, $d\gamma_p$ has a non-trivial kernel of dimension $k > 0$,  there are $k$ null principal curvatures:  the tangent hyperplane 
 is infinitesimally stationary along $k$
 independent directions at 
$x$.
When  the Hessian vanishes identically,  the rank of the Gauss map is not maximal globally, the dimension of the dual variety  is less than $n$, and the variety is said with degenerate Gauss image or "tangential degenerate", or "developable". Of course,  $ X$ is smooth if and only if  $\{\frac{\partial f}{\partial z_0}(p) ; \frac{\partial f}{\partial z_1}(p) ; \dots ; \frac{\partial f}{\partial z_N}(p)\} $ form a regular sequence; in particular,
if $X$ is smooth, then its Hessian cannot be not identically zero, hence, having a vanishing Hessian implies  that 
$\Sing(X)\neq \emptyset$. 

Moreover, if $X$ has degenerate Gauss image, then  general fibres of $\gamma$ are linear (\cite{Z}, cap 1, cor. 2.8), and $\Sing(X)$  cuts a general fibre along a codimension $1$ subscheme.
As mentioned above, the Hessian of a hypersurface $V(f)$ is closely related to its local curvature. In  fact,

\begin{theorem*}[Gauss curvature formula, {{\cite[Proposition 7.2.3]{R16}}}]
Let $K(p)$ denote the Gaussian curvature of $X$ at a point $p.$ Then $K(p)=0$ for every $p \in X \setminus \mathrm{Sing}(X) \cup V(z_0)$ if and only if $h_f \equiv 0.$ 
\end{theorem*}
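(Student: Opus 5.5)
We work on the affine chart $z_0=1$, where $X\setminus V(z_0)$ is the affine hypersurface $\{F=0\}\subset\C^N$ with $F(x_1,\dots,x_N)=f(1,x_1,\dots,x_N)$. At a smooth point $p$ the Gaussian curvature $K(p)$ is, up to a nonvanishing factor (a power of $\|\nabla F(p)\|$), the determinant of the shape operator. Equivalently, it is the determinant of the differential of the affine Gauss map $p\mapsto [\nabla F(p)]$ restricted to $T_pX$. The classical formula expressing this is
\[
K(p)\,\|\nabla F(p)\|^{N+1} \;=\; -\det\begin{pmatrix} \mathrm{Hess}(F)(p) & \nabla F(p)^{T}\\ \nabla F(p) & 0\end{pmatrix},
\]
the bordered Hessian. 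We use the complex-bilinear version, in which $\|\cdot\|^2$ is the sum of squares and is only required to be nonzero generically. The plan is therefore:
\begin{enumerate}
\item Establish this formula for $K$.
\item Relate the bordered affine Hessian to the projective Hessian $h_f$.
\item Conclude by a density argument.
\end{enumerate}

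For the second step we use Euler's identities for the homogeneous $f$ of degree $d$:
\[
\sum_j z_j f_{ij}=(d-1)f_i, \qquad \sum_i z_i f_i = d f .
\]
On the chart, $f_{ij}=F_{ij}$ for $i,j\ge 1$ and $f_i=F_i$ for $i\ge1$. Expressing $f_0$ and $f_{0j}$ via Euler, we perform row and column operations on $H_f$: replace row $0$ by $z_0\cdot(\text{row }0)+\sum_{j\ge1}z_j\cdot(\text{row }j)$, and do the same for column $0$. This yields the standard identity
\[
z_0^2\, h_f \;=\; \frac{(d-1)^2}{\,\cdot\,}\det\begin{pmatrix} \mathrm{Hess}(F) & \nabla F^{T}\\ \nabla F & \tfrac{d}{d-1}F\end{pmatrix}
\]
up to an explicit nonzero constant depending only on $d$. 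Restricting to $F=0$, the corner entry vanishes, so on $X\cap\{z_0\neq0\}$ the projective Hessian equals a nonzero constant times the bordered Hessian. This is exactly the quantity appearing in the curvature formula. Hence on $X\setminus(\Sing X\cup V(z_0))$ we have $K(p)=0$ if and only if $h_f(p)=0$, provided $\|\nabla F(p)\|\neq0$.

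For the conclusion, if $h_f\equiv0$ then the bordered Hessian vanishes on $X$, so $K\equiv0$ there. Conversely, if $K\equiv 0$ on the smooth affine part, then $h_f$ vanishes on a dense open subset of the irreducible hypersurface $X$, hence on all of $X$, so $f\mid h_f$. It remains to upgrade this to $h_f\equiv 0$, and this is the main obstacle: vanishing on $X$ only gives divisibility by $f$. The identity above holds off $X$ too, with the extra corner term $\tfrac{d}{d-1}F$, and that term is what prevents $h_f|_X\equiv 0$ from implying $h_f\equiv0$ in general.

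The first approach is to read the statement as the paper's cited source \cite[Proposition 7.2.3]{R16} intends: "$h_f\equiv0$" taken modulo $f$, i.e. as a condition on $X$, or else for cones/conic structure. Failing that, I would use the geometric interpretation. Suppose $K\equiv0$ on $X$. Then the Gauss map $\gamma$ of $X$ is degenerate, so $\dim X^*<N-1$. By the Jacobian criterion applied to the polar map $\nabla f:\PP^N\dashrightarrow\PP^{N*}$ restricted to $X$, we then need to show that the polar map itself is degenerate. This last implication is precisely where care is needed. If it fails, I would record that the argument proves the equivalence with "$h_f$ vanishes on $X$", and treat that as the intended content.
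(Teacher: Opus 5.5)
The paper gives no proof of this statement; it only quotes it from \cite{R16}, so I assess your argument on its own. Your computation is sound. The Euler row/column operations give exactly $z_0^2h_f=(d-1)^2\det\begin{pmatrix}\frac{d}{d-1}f & \nabla' f\\ (\nabla' f)^{T} & \mathrm{Hess}'(f)\end{pmatrix}$, with $\nabla',\mathrm{Hess}'$ taken in $z_1,\dots,z_N$ and no further denominator. Combined with the bordered-Hessian expression for $K$, this proves $h_f\equiv 0\Rightarrow K\equiv 0$. More precisely, it proves that $K\equiv 0$ on $X\setminus(\Sing X\cup V(z_0))$ if and only if $h_f$ vanishes on $X$, i.e.\ $f\mid h_f$.

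The genuine gap is the step you leave open: upgrading $f\mid h_f$ to $h_f\equiv 0$ by showing that a degenerate Gauss map forces a degenerate polar map. This cannot be done, because that implication is false. With it, the ``only if'' direction of the statement as literally written is false too.

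For a counterexample, take $X\subset\PP^3$ the tangent developable of the twisted cubic. Equivalently, $X$ is the zero set of the discriminant $f=z_0^2z_3^2-6z_0z_1z_2z_3+4z_0z_2^3+4z_1^3z_3-3z_1^2z_2^2$ of the binary cubic $z_0x^3+3z_1x^2y+3z_2xy^2+z_3y^3$.
\begin{itemize}
\item $K\equiv 0$: the tangent plane of $X$ is constant along each ruling (it is the osculating plane of the cubic). So $X^*$ is a curve, the second fundamental form is degenerate at every smooth point, and $K\equiv 0$.
\item $X$ is not a cone: a cone with vertex $v$ has $X^*\subset v^{\perp}$, whereas here $X^*$ is the dual twisted cubic, which spans $\PP^{3*}$.
\item $h_f\not\equiv 0$: since $N=3$, Gordan--Noether \cite{GN} then gives $h_f\not\equiv 0$. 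In fact $h_f$ is a nonzero constant multiple of $f^2$.
\end{itemize}

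So your first fallback is the right one, and you should state it outright rather than hedge. The correct statement is that $K\equiv 0$ if and only if $f\mid h_f$, i.e.\ if and only if $X^*$ is not a hypersurface; your argument proves exactly this. By contrast, $h_f\equiv 0$ means the polar map is degenerate on all of $\PP^N$, which is strictly stronger.

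A minor point: over $\C$ the normalising factor $\sum_i F_i^2$ can vanish identically on $X$ (e.g.\ for the cone $z_1^2+\dots+z_N^2=0$). It is therefore safer to define ``$K(p)=0$'' as the vanishing of the bordered Hessian than to rely on that factor being generically nonzero.
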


Therefore, which are these hypersurfaces?
If 
 $N\geq 3$, where only cones occur, as  proved by P. Gordon and M. Noether in \cite{GN} in 1876. 
 
 In higher dimensions, the classification of
hypersurfaces not cones with vanishing Hessian play a classical role in the history of algebraic geometry, the first examples being given by P. Gordon and M. Noether in the same paper \cite{GN}. In particular, they gave a complete description of the hypersurfaces in $\mathbb{P}^4$ and a series of examples of hypersurfaces with vanishing Hessian not
cones for any $N\geq 5$. 
An interesting observation is that a null hessian hypersurface can be given as a tangential variety of a subvariety $Y$, where $Y$ is its focal loci, that is the locus where the Hessian determinant drops its rank. Note that in general $Y\subset\Sing (X)$ but not necessarily equality holds (\cite{R16}). 
For its elegance, simplicity and source of further examples the celebrated  Perazzo cubic threefold in $\mathbb{P}^4$, given by  $f (z_0, z_1, z_2, z_3, z_4) = z_0z_3^2+ z_1z_3z_4 + z_2z_4^2=0$ \cite{Pe}
is worth to be mentioned. It is in fact generalized as the 
Perazzo cubic hypersurface in $\mathbb{P}^N$ by the form $f (z_0,\dots,z_N) = z_0z_3^2+ z_1z_3z_4 + z_2z_4^2+z_5^3+\cdots +z_N^3=0$. Besides, the Perazzo hypersurface of degree $d\geq 3$ is a hypersurface of the form $V(f),$ with
$$
f(z_0,\dots,z_N)=z_0A_1(z_k,\dots,z_n)+\cdots +z_{k-1}A_k(z_k,\dots,z_n)+A_{k+1}(z_k,\dots,z_n)
$$
where for $i\in \{1,\dots,k \}$ $A_i $'s are algebraically dependent but linearly independent of degree $d-1$, while $A_{k+1}$ has degree $d$. So in particular $h_f\equiv 0$.

Other known examples are Franchetta's and Permutti's, and their generalizations \cite{CRS, Franchetta1954, G, GR, Permutti1957, Permutti1964}.

A new interesting family of examples of varieties with null Hessian is given by Gondim, Russo, and Staglianò in \cite{GRS}.

Without entering into the picture, we only recall that the study of null Hessian hypersurfaces can also be approached in a more algebraic setting. In fact,  for a given  homogeneous polynomial $f\in \mathbb{C}[z_0,\dots,z_N]$ of degree $d$ one can consider the associated Artinian Gorenstein algebra $ A =
S/ Ann_S(f)$, where $S$ is the
ring of differential operators on $\mathbb{C}[z_0,\dots,z_N]$, and explore the rank of its intern multiplication maps, by means of the so-called strong (or weak) Lefschetz Property. An interesting literature can be found within this setting (see \cite{WB, FMM, G,  MW, RP,  W} just to cite some of them).

Recently, Conca, Naldi, Ottaviani and Sturmfels \cite{TV23} introduced a class of algebraic varieties called \emph{Taylor varieties} $\mathcal{T}^n_{d,e,m}$ whose definition is related to the Taylor expansion of a rational function in $n$ variables. In certain cases, these varieties are determinantal hypersurfaces; moreover,  when $m = d+1$, they have vanishing Hessian (see \cite[Theorem 4.4]{TV23}). Thus, Taylor hypersurfaces provide a new source of varieties with vanishing Hessian, although some instances recover very well known examples (for example, when $n = 2, d = 1, e = 1, m = 2$, one obtains a cone over the Perazzo hypersurface, cf. \cite[Example 4.3]{TV23}).
Motivated by our interest in hypersurfaces with vanishing Hessian, in the following we extend this result by proving that Taylor hypersurfaces of the form $\mathcal{T}^2_{d,e,d+2}$ also have vanishing Hessian.

Our main result is the following.
\begin{theorem}\label{theorem: main}
Fix $d, e \in \mathbb{N}$. If $\mathcal{T}^2_{d,e,d+2} = V(\det(P_T))$ and it is a non-defective hypersurface, 
then the Hessian satisfies $h_f \equiv 0$.
\end{theorem}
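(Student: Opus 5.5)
The plan is to put $f=\det(P_T)$ into \emph{Perazzo form}, i.e.\ linear in a set of variables whose coefficients are algebraically dependent, and then invoke the classical Gordan--Noether/Perazzo criterion recalled in the introduction. Recall the setup of \cite{TV23} for $n=2$. A truncated series $F=\sum_{|\alpha|\le m} c_\alpha x^\alpha$ lies in $\tv^2_{d,e,m}$ when there is $q=\sum_{|\beta|\le e} q_\beta x^\beta$ such that $qF$ has no terms in degrees $d+1,\dots,m$. This is a linear system in the $q_\beta$. Its matrix $P_T$ has rows indexed by the monomials $x^\alpha$ with $d+1\le|\alpha|\le m$, columns indexed by the monomials $x^\beta$ with $|\beta|\le e$, and entries $c_{\alpha-\beta}$ (zero if $\alpha-\beta\notin\mathbb N^2$). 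In our case $m=d+2$, so there are $(d+2)+(d+3)=2d+5$ rows, and the hypersurface hypothesis means $P_T$ is square, i.e.\ $\binom{e+2}{2}=2d+5$. I will use non-defectiveness only to guarantee that $f=\det(P_T)$ is a nonzero reduced equation of the hypersurface, so that $h_f$ is the Hessian of $\tv^2_{d,e,d+2}$.

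\textbf{Step 1: linearity in the top-degree coefficients.} The coefficients $z_\alpha:=c_\alpha$ with $|\alpha|=d+2$ occur in $P_T$ only in the column $\beta=0$, and there only in the rows $|\alpha|=d+2$; each such entry is exactly $z_\alpha$. Let $y$ denote all the remaining variables. Expanding $\det(P_T)$ along the column $\beta=0$ gives
\[
f=\sum_{|\alpha|=d+2} z_\alpha\, A_\alpha(y)+B(y).
\]
Here $A_\alpha$ is, up to sign, the maximal minor of the $(2d+5)\times(2d+4)$ matrix $N$ (that is, $P_T$ with the column $\beta=0$ removed) obtained by deleting row $\alpha$. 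The polynomial $B$ collects the contributions of the degree-$(d+1)$ entries of that column. Since $N$ involves only the $y$ variables, this is exactly the Perazzo shape, with $d+3$ linear variables $z_\alpha$.

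\textbf{Step 2: reduction to algebraic dependence.} Suppose the $A_\alpha$ are algebraically dependent, say $G(A_\alpha)=0$ with $G\neq 0$. Then $G(\partial f/\partial z_\alpha)=0$, so the partials of $f$ are algebraically dependent, and by the criterion recalled in the introduction $h_f\equiv 0$. It therefore suffices to show that the map $y\mapsto (A_\alpha(y))_{|\alpha|=d+2}$ has image of dimension at most $d+2$.

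\textbf{Step 3 (main obstacle): bounding the image of the cofactor map.} The full vector of signed maximal minors of $N$ spans $\ker N^{T}$ for general $y$. Its last $d+3$ entries are the $A_\alpha$. Split $N=\binom{N_1}{N_2}$ into the $(d+2)$ rows of degree $d+1$ and the $(d+3)$ rows of degree $d+2$, and write a kernel vector as $(u,v)$, so that $u^TN_1+v^TN_2=0$.

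My first attempt will exploit the shift structure of the Hankel/Toeplitz-like blocks. Multiplying $q$ by $x$ or by $y$ moves columns of degree $e-1$ into degree-$e$ positions, while the entries of $N_2$ in the columns $|\beta|=1$ are degree-$(d+1)$ coefficients, and these also appear in $N_1$ at $\beta=0$. The aim is to show that $v$ is determined, up to scale, by a subset of the $y$-variables of size at most $d+2$. Concretely, I expect the relations $u^TN_1+v^TN_2=0$ restricted to the columns $|\beta|\le 1$ to express $v$ (up to scale) as a rational function of $u$ and of the $d+2$ coefficients of degree $d+1$ alone. That would give a factorization of $[A_\alpha]$ through a space of dimension $\le d+2$.

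If this explicit route stalls, I will fall back on a torus-action argument, which gives an independent dimension drop. The weight of $(x,y)$ rescales $c_\alpha$ by $t^{\alpha}$, acts compatibly on rows and columns, and makes the cofactor map equivariant. I will also check the smallest case, $d=0$ and $e=2$, where $P_T$ is $6\times 6$ and $\binom{4}{2}=6\ne 5$ fails. More generally I will compute small admissible $(d,e)$ in Macaulay2 to confirm the predicted number of independent relations among the $A_\alpha$ before writing the general argument.
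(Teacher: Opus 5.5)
\textbf{Steps 1 and 2 are fine; the gap is Step 3.} Your Steps 1 and 2 are correct. The coefficients $z_\alpha$ with $|\alpha|=d+2$ occur only in the column $\beta=0$, so $f=\sum_\alpha z_\alpha A_\alpha(y)+B(y)$ with $A_\alpha=\partial f/\partial z_\alpha$. An algebraic relation among the $A_\alpha$ would therefore force $h_f\equiv 0$. But Step 3 carries the entire content of the proof, and it is left unproved.

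The mechanisms you sketch cannot supply it:
\begin{itemize}
\item In $N$ only $\beta=(1,0),(0,1)$ remain among the columns with $|\beta|\le1$. Restricted to them, $u^TN_1+v^TN_2=0$ is just two scalar equations, which cannot determine the $d+3$ entries of $v$.
\item A factorization of $[v]$ through $u$ together with the degree-$(d+1)$ coefficients passes through $2d+4\ge d+3$ parameters, so it gives no dimension bound.
\item Equivariance under the torus, or even under $\mathrm{GL}_2$, only says that the image of $y\mapsto[A_\alpha(y)]$ is invariant. $\PP^{d+2}$ is invariant too.
\item Your test case $(d,e)=(0,2)$ is not admissible. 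The condition $\binom{e+2}{2}=2d+5$ forces $e\ge4$, and the smallest case is $(5,4)$.
\end{itemize}

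There is also no reason to believe Step 3 holds. Consider the neighbouring family $\tv^2_{4,2,5}$, a case with $m=d+1$ where the Hessian does vanish. There the analogous top-degree cofactors are algebraically independent. Take a general $\ell\in\mathbb{C}[x,y]_5^*$, choose $T_3\neq 0$ with $\ell\perp T_3\cdot\mathbb{C}[x,y]_2$, and choose a general $T_4$ with $\ell\perp T_4\cdot\mathbb{C}[x,y]_1$. One checks that the five columns $x^\beta T_4,\,x^\gamma T_3$ of $N$ are then independent, so they span $\ell^\perp$. Hence $[A]=[\ell]$, and the cofactor map is dominant onto $\PP^5$. A parameter count suggests the same happens for $\tv^2_{d,e,d+2}$: for general $(u,v)$, the $2d+4$ conditions $u^TN_1+v^TN_2=0$ are independent linear conditions on $y$.

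\textbf{The missing idea is much simpler.} Every entry of $P_T$ is either $0$ or $c_{\rho-\sigma}$ with $|\rho|\ge d+1$ and $|\sigma|\le e$. So $f=\det(P_T)$ involves only the $c_\gamma$ with $|\gamma|\ge d+1-e$. The squareness condition gives $d=(e^2+3e-8)/4$, hence $d-e=(e^2-e-8)/4\ge 1$ for $e\ge 4$. For $(d,e)=(5,4)$ you can see directly in the displayed matrix that no $c_\gamma$ of degree $<2$ occurs. Therefore $f$ does not depend on $c_{0,0},c_{1,0},c_{0,1}$. The corresponding rows of the Hessian matrix are zero and $h_f\equiv0$; that is, $V(f)$ is a cone.

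The paper argues differently. It adds to the blocks $C_j$ combinations of the columns of $C_{d-e+2}$, weighted by auxiliary variables $\lambda^j_\alpha$, and extracts linear relations $\mathcal M\cdot\mathbf c=0$ among partials of several degrees. In either approach the vanishing comes from partials other than the top-degree ones. So the Perazzo shape in the variables $z_\alpha$ is not the right lever here.
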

The paper is organized as follows. In \Cref{section: taylor} we introduce Taylor varieties, discuss their expected dimension and explain how they are described via the Pad\'e matrix (\Cref{subsection: pade marix}), an essential tool in our work. In \Cref{subsection: example}, we deal with the case $\tv^2_{5,4,7},$ which is the smallest example of a non-defective hypersurface of the form $\tv^n_{d,e,d+2},$ and prove that its Hessian vanishes identically. \Cref{section: main theorem} considers the general case $\tv^2_{d,e,d+2}$ and it is devoted to show our main \Cref{theorem: main}. Finally, \Cref{section: M2 code} contains the  \texttt{Macaulay2} code used to facilitate our computations \cite{M2}.

\subsection*{Acknowledgments} We thank Giorgio Ottaviani for introducing us in Taylor varieties topic and providing the inspiration that sparked this project.  This work has been settled during our visit at the University of Split, Croatia in August 2025, in the frame of Women In Numbers Europe 5. All authors are grateful for the warm hospitality and heartily thank the organizers for generously giving an efficient, friendly environment and for financial support.  Guardo and Supino were partially supported by GNSAGA of INdAM. Guardo was partially supported by Progetto Piaceri 2024-26, Università di Catania and by PRIN 2022, “$0$-dimensional schemes, Tensor Theory and applications” – funded by the European Union Next Generation EU, Mission 4, Component 2 -- CUP: E53D23005670006. Gomes Ribeiro acknowledges support received from the University of Birmingham through a PhD Scholarship whilst writing this work. She also received partial travel funding from the University of Birmingham to attend Women In Numbers Europe 5. Santarsiero was supported by the European Union under NextGenerationEU. PRIN 2022, Prot. 2022E2Z4AK and PRIN 2022 SC-CUP: I53C24002240006.

\section{Taylor varieties}\label{section: taylor}
Let $P, Q \in \mathbb{C}[x_1,\dots,x_n]$ such that $P(0)=Q(0)=1$, $\deg(P)\leq d$ and $\deg(Q)\leq e$. Consider the Taylor expansion at the origin  of the rational function:   
\begin{align}\label{a: rat}
\frac{P}{Q}=1+\sum_{0<|\bm{\gamma}| \leq m}c_{\bm{\gamma}}x^{\bm{\gamma}} +  \{\text{ terms of order } \geq m+1\},   
\end{align}
where $x^\gamma=x_1^{\gamma_1}\cdots x_n^{\gamma_n}$ and $|\bm{\gamma}|=\gamma_1+\cdots+\gamma_n\ > 0$ is the total degree of $x^{\bm{\gamma}}$. The polynomial $\sum_{0<|\bm{\gamma}| \leq m}c_{\bm{\gamma}}x^{\bm{\gamma}}$ is called the \emph{Taylor polynomial} of $P/Q$ of order $m$.   Consider the map 
\begin{align*}
\psi: \C^{{d+n \choose n}-1} \times \C^{{e+n \choose n}-1} &\to \mathbb{C}^{{n+m \choose n}-1}   \\ 
(P,Q) &\mapsto (c_{\gamma})_{0<|\gamma| \leq m}.
\end{align*}

The \textit{Taylor variety} $\mathcal{T}^n_{d,e,m}$ is defined as the projective closure of $\mathrm{Im}(\psi)$ in $\mathbb{P}^N=\mathbb{P}^{{n+m \choose n}-1}$, with variables $[z_0,\dots,z_N]=[c_{\gamma}]_{0\leq |\gamma| \leq m}$ , endowed with monomial basis in decreasing order.
The ideal of this variety is denoted by $\mathcal{I}_{d,e,m}^{n}.$ Being the closure of the image of an irreducible variety via a rational map in the projective space, $\mathcal{T}^n_{d,e,m}$ is irreducible and consequently the ideal $\mathcal{I}_{d,e,m}^{n}$ is prime.

The expected dimension of $\mathcal{T}^n_{d,e,m}$ is  $$
\ed{\tv^n_{d,e,m}}=\min\bigg\{{d+n \choose n}+{e+n \choose n}-2,{m+n \choose n}-1\bigg\}.
$$
If the equality $\ed{\tv^n_{d,e,m}}=\dim(\tv^n_{d,e,m})$ holds, the Taylor variety is said to be \emph{non-defective} and \emph{defective} otherwise.     
Defectiveness actually occur and  in \cite{TV23}, the authors  provide examples. For instance, $\tv^3_{2,2,3}\subset \PP^{19}$ is expected to be a hypersurface but has dimension $17$ (see \cite[Proposition 4.1]{TV23}). Note that The Taylor varieties $\tv^n_{d,e,m}$ and $\tv^n_{e,d,m}$   are birationally isomorphic, by exchanging the role of $T$ with $1/T$ (see \cite[Proposition 5.2]{TV23}). 

Moreover, when the degree $m$ of the Taylor truncation is small with respect to the degrees $d$ and $e$ of the rational function then $\tv^n_{d,e,m}=\PP^{{m+n \choose n}-1}$, so the interesting case is when ${d+n \choose n}+{e+n \choose n}-2<{m+n \choose n}-1$.

\subsection{The Pad\'e matrix}\label{subsection: pade marix} 

Let $x=(x_1,\dots,x_n)$ and choose $T=\sum_{0\leq |\bm{\gamma}| \leq m}c_{\bm{\gamma}}x^{\bm{\gamma}}$  a  polynomial of order $m$ (not identically zero).  Consider the map 

\begin{align}\label{phi}
\varphi_T: \C[x]_{\leq e} &\to \C[x]_{\leq e+m} \to \mathrm{Span}\{M_{d+1,m}\}\subset\C[c_{\gamma}], \\
Q &\mapsto QT \mapsto QT\mid_{M_{d+1,m}}\nonumber
\end{align} where $M_{d+1,m}$ denotes the set of monomials on $x$ of total degree in the interval  $\{d+1,\dots,m\}$ 
 $$
 M_{d+1,m}=\bigg\{\prod_{i=1}^nx_{i}^{a_{i}} : d+1 \leq \sum_{i=1}^na_{i} \leq m\bigg\}.
 $$ 
Notice that $T\in \tv^n_{d,e,m}$ if and only if there exist a non zero polynomial $Q \in \C[x]_{\leq e}$ such that   $QT\mid_{M_{d+1,m}}=0$, so if and only if  kernel of the map $\varphi_T$ is non trivial.

Since $\varphi_T$ is a linear application, we consider the matrix $P_T$ associated to $\varphi_T$  with respect to the standard monomial basis ordered by degree, for convenience chosen increasingly  for the domain of $\varphi_T$ and decreasingly for the image.

We call the $\scriptstyle{\left({m+n \choose n}-{d+n \choose n}\right)\,\times\, {e+n \choose n}}$ matrix $P_T$ associated to the map $\varphi_T$ the \emph{Pad\'e matrix}. It is a matrix whose entries are picked among the coordinates of the point $T\in \mathbb{C}^{n+m \choose n}=\mathbb{C}[x]_{\leq m}$,
in particular, it turns out to have a block  
Hankel structure (see \cite{TV23}).
If $P_T$ is not generically of maximal rank, then $\tv^n_{d,e,m}=\PP^{{n+m \choose n}-1}$, otherwise the ideal $\mathcal{I}^n_{d,e,m}$ of $\tv^n_{d,e,m}$ is related to the ideal generated by the maximal nonvanishing minors of $P_T$. Unfortunately, the latter may be not prime, and $\mathcal{I}^n_{d,e,m}$ may not be Cohen--Macaulay, for instance in the case $\tv^2_{1,1,3}$. It is conjectured in  \cite{TV23} that $\mathcal{I}^n_{d,e,m}$ can be obtained from the ideal of non vanishing maximal minors of $P_T$ saturated by the ideal saturated by the
ideal of maximal non-vanishing minors of the so-called  \emph{reduced Pad\'e matrix} $\hat{P_T}$, which  is defined from $P_T$ by deleting the first column.
 At any rate, the Pad\'e matrix give us a to  to work out examples of Taylor varieties which are hypersurfaces. We remark that there are, for $n=3$,  Taylor varieties  expected to be a hypersurfaces but having codimension two, and there also exist  at least a Taylor variety that is expected to fill its ambient projective space but is in fact a hypersurface (\cite{TV23}, prop 4.1).  

The matrix $P_T$ is a square matrix if and only if \begin{align*}
\ed{\tv^n_{d,e,m}}=\dim \mathbb{P}^{{n+m \choose n}-1}-1,\end{align*}
if $P_T$ has maximal rank and   $\tv^n_{d,e,m}$ is a non-defective hypersurface, then $\tv^n_{d,e,m}=V(\sqrt{\det(P_T)}),$ i.e, $\mathcal{I}^n_{d,e,m}=\langle \sqrt{\det(P_T)} \rangle.$
This case is particularly interesting, for instance: in \cite[Theorem 4.4]{TV23}, the authors proved that the hypersurface $\tv_{d,e,d+1}^n$ has vanishing Hessian. In particular, $\tv_{1,1,2}^2$ is a cubic hypersurface of $\PP^7$, cone over a Perazzo variety, and is the orbit closure of the action
of a 5-dimensional group on $\PP^7$.

In our paper we focus on the case $n = 2$ since Taylor varieties in one variable are classical well-known objects such as projective spaces, secant varieties of rational normal curves, cones or linear sections of secant varieties (of rational normal curves) as described in \cite{TV23}.

Before proving \Cref{theorem: main}, we study in detail the case $n=2$, $d=5, e=4$ and $m=7$. 

\section{The case $n=2,d=5,e=4$}\label{subsection: example}
For this part we fix $n=2,$ $d=5,e=4$ so that $m=7$. We have two variables and to lighten the notation we denote them by $x$ and $y$. Our goal is to show that $h_f \equiv 0$ without explicitly computing it. 
The corresponding Taylor polynomial is of degree 7 and if we denote by $T_i=\sum_{\gamma_1+\gamma_2=i}c_{\gamma_1,\gamma_2}x^{\gamma_1}y^{\gamma_2}$ for $ i \in \{0,\dots,7\}$, then we write the Taylor polynomial $T$ in terms of its homogeneous components as $$
T=\sum_{\gamma_1+\gamma_2 \leq 7}c_{\gamma_1,\gamma_2}x^{\gamma_1}y^{\gamma_2}=T_0+\cdots+T_7.$$   
Following \Cref{subsection: pade marix}, we construct the Pad\'e matrix associated with $\varphi_T$ as in (\ref{phi}). For the domain of $\varphi_T$ we considered the monomial basis in increasing order of degrees and each degree is in decreasing order, while on the target space we considered the monomial basis in decreasing order. We have
\begin{align*}
P_T=\left(\!\begin{array}{ccccccccccccccc}
     c_{7,0}&c_{6,0}&0&c_{5,0}&0&0&c_{4,0}&0&0&0&c_{3,0}&0&0&0&0\\
     c_{6,1}&c_{5,1}&c_{6,0}&c_{4,1}&c_{5,0}&0&c_{3,1}&c_{4,0}&0&0&c_{2,1}&c_{3,0}&0&0&0\\
     c_{5,2}&c_{4,2}&c_{5,1}&c_{3,2}&c_{4,1}&c_{5,0}&c_{2,2}&c_{3,1}&c_{4,0}&0&c_{1,2}&c_{2,1}&c_{3,0}&0&0\\
     c_{4,3}&c_{3,3}&c_{4,2}&c_{2,3}&c_{3,2}&c_{4,1}&c_{1,3}&c_{2,2}&c_{3,1}&c_{4,0}&c_{0,3}&c_{1,2}&c_{2,1}&c_{3,0}&0\\
     c_{3,4}&c_{2,4}&c_{3,3}&c_{1,4}&c_{2,3}&c_{3,2}&c_{0,4}&c_{1,3}&c_{2,2}&c_{3,1}&0&c_{0,3}&c_{1,2}&c_{2,1}&c_{3,0}\\
     c_{2,5}&c_{1,5}&c_{2,3}&c_{0,5}&c_{1,4}&c_{2,3}&0&c_{0,4}&c_{1,3}&c_{2,2}&0&0&c_{0,3}&c_{1,2}&c_{2,1}\\
     c_{1,6}&c_{0,6}&c_{1,5}&0&c_{0,5}&c_{1,4}&0&0&c_{0,4}&c_{1,3}&0&0&0&c_{0,3}&c_{1,2}\\
     c_{0,7}&0&c_{0,6}&0&0&c_{0,5}&0&0&0&c_{0,4}&0&0&0&0&c_{0,3}\\
     c_{6,0}&c_{5,0}&0&c_{4,0}&0&0&c_{3,0}&0&0&0&c_{2,0}&0&0&0&0\\
     c_{5,1}&c_{4,1}&c_{5,0}&c_{3,1}&c_{4,0}&0&c_{2,1}&c_{3,0}&0&0&c_{1,1}&c_{2,0}&0&0&0\\
     c_{4,2}&c_{3,2}&c_{4,1}&c_{2,2}&c_{3,1}&c_{4,0}&c_{1,2}&c_{2,1}&c_{3,0}&0&c_{0,2}&c_{1,1}&c_{2,0}&0&0\\
     c_{3,3}&c_{2,3}&c_{3,2}&c_{1,3}&c_{2,2}&c_{3,1}&c_{0,3}&c_{1,2}&c_{2,1}&c_{3,0}&0&c_{0,2}&c_{1,1}&c_{2,0}&0\\
     c_{2,4}&c_{1,4}&c_{2,3}&c_{0,4}&c_{1,3}&c_{2,2}&0&c_{0,3}&c_{1,2}&c_{2,1}&0&0&c_{0,2}&c_{1,1}&c_{2,0}\\
     c_{1,5}&c_{0,5}&c_{1,4}&0&c_{0,4}&c_{1,3}&0&0&c_{0,3}&c_{1,2}&0&0&0&c_{0,2}&c_{1,1}\\
     c_{0,6}&0&c_{0,5}&0&0&c_{0,4}&0&0&0&c_{0,3}&0&0&0&0&c_{0,2}
     \end{array}\!\right).  
\end{align*}

\begin{remark}\label{remark: the pade of the example is nonsingular}
To ensure that the generic rank of $P_T$ is $15$ and as a consequence that $\tv^2_{5,4,7}$ is non-defective, it is enough to compute its determinant at a random choice of coefficients $c_\gamma$'s and verify that it is not zero. One can easily replicate the computation using the Macaulay2 code of \Cref{section: M2 code}, \cite{M2}.

Hence, the Taylor variety $\tv^2_{5,4,7}\subset \PP^{35}$ is the hypersurface defined by the vanishing of the degree fifteen polynomial $f:=\det(P_T)$. Note that the ideal generated by $f$ is prime, as pointed out in  \Cref{section: taylor}.
\end{remark}

Let $C_j: \mathbb{C}[x,y]_{7-j} \to \mathbb{C}[x,y]_{6,7}$ denote the matrix of the linear map multiplication by $T_j.$ Notice that $C_j$ has entries the coefficients $c_{\gamma}$ of the polynomial $T$ and size $15\times (7-j+1).$  

We rewrite the Pad\'e matrix as $$P_T=\begin{bmatrix}
\underbrace{C_7}_{15\times 1} & \underbrace{C_6}_{15\times 2} & \underbrace{C_5}_{15\times 3} & \underbrace{C_4}_{15\times 4} & \underbrace{C_3}_{15\times 5} 
\end{bmatrix}_{15 \times 15},$$
and we remark that for $j=3,4,5,6,7$ the blocks are given by $C_j=C_j(c_{\mu,\nu} \mid \mu+\nu=j,j-1)$.

For $j \in \{4,5,6,7\},$ let $d_j=j-3.$ We define vectors of new independent variables $\lambda^j=(\lambda_{\alpha}^j)_{|\alpha|=d_j},$ where the entries are indexed by the monomials in $x,y$ of degree $d_j$:
\begin{align*}
 \lambda^4=\begin{pmatrix}
        \lambda^4_{1,0}\\ \lambda^4_{0,1}
    \end{pmatrix}, \,     \lambda^5=\begin{pmatrix}
        \lambda^5_{2,0}\\ \lambda^5_{1,1}\\ \lambda^5_{0,2}
    \end{pmatrix}, \,  \lambda^6=\begin{pmatrix}
        \lambda^6_{3,0}\\ \lambda^6_{2,1}\\ \lambda^6_{1,2}\\ \lambda^6_{0,3}
    \end{pmatrix}, \,  \lambda^7=\begin{pmatrix}
        \lambda^7_{4,0}\\ \vdots \\ \lambda^7_{0,4}
    \end{pmatrix}. 
\end{align*}
We replace the first column $C_j^1$ of $C_j$ with $C_j^1+C_{d+2-e}\begin{bmatrix}
\lambda^j & \mathbf{0}    
\end{bmatrix}^T,$ where $\mathbf{0}$ is the zero vector of length $4-d_j.$ For example, for $j=6,$ this corresponds to adding to $C_6^1$ a linear combination of the columns of $C_3$ 
\begin{align*}
C_6^1 \rightarrow C_6^1+\lambda_{30}^6 C_3^1+\lambda_{21}^6 C_3^2 + \lambda_{12}^6 C_3^3 + \lambda_{03}^6 C_3^4.    
\end{align*}
We also modify the remaining columns via $C_j^i \rightarrow C_j^i+C_{d+2-e}\begin{bmatrix}
\bm{0}_{i-1} & \lambda^j & \bm{0}_{4-d_j-i+1}  
\end{bmatrix}^T,$ where $\bm{0}_{i-1}$ is a row vector of length $i-1$ and $\bm{0}_{4-d_j-i+1}$ is a row vector of length $4-d_j-i+1.$ For instance, the second column of $C_6$ is transformed into 
\begin{align*}
C_6^2 \rightarrow C_6^2+\lambda_{30}^6C_3^2+\lambda_{21}^6 C_3^3 + \lambda_{12}^6 C_3^4 + \lambda_{03}^6 C_3^5,  
\end{align*}
and similarly for the others. We can also express the modification in terms of matrix multiplication: \begin{align*}
   & C_4 \rightarrow C_4+C_3\begin{pmatrix}
        \lambda^4_{1,0} & 0 & 0 & 0\\
        \lambda^4_{0,1} & \lambda^4_{1,0} & 0 & 0\\
        0 & \lambda^4_{0,1} & \lambda^4_{1,0} & 0\\
        0 & 0 & \lambda^4_{0,1} & \lambda^4_{1,0}\\
         0 & 0 & 0 & \lambda^4_{0,1}\\
    \end{pmatrix}=:C_4+C_3(\lambda^4), \\ &C_5 \rightarrow C_5+C_3\begin{pmatrix}
        \lambda^5_{2,0} & 0 & 0 \\
        \lambda^5_{1,1} & \lambda^5_{2,0} & 0 \\
        \lambda^5_{0,2} & \lambda^5_{1,1} & \lambda^5_{2,0} \\
        0 & \lambda^5_{0,2}& \lambda^5_{1,1} \\
         0 & 0 & \lambda^5_{0,2} \\
    \end{pmatrix}=:C_5+C_3(\lambda^5), \\
 & C_6 \rightarrow C_6+C_3\begin{pmatrix}
        \lambda^6_{3,0} & 0  \\
        \lambda^6_{2,1} & \lambda^6_{3,0}  \\
        \lambda^6_{1,2} & \lambda^6_{2,1}  \\
        \lambda^6_{0,3} & \lambda^6_{1,2} \\
         0 & \lambda^6_{0,3}  \\
    \end{pmatrix}=:C_6+C_3(\lambda^6), \\   &C_7 \rightarrow C_7+C_3\begin{pmatrix}
        \lambda^7_{4,0}  \\
        \vdots  \\
        \lambda^7_{0,4}  \\
    \end{pmatrix}=:C_7+C_3(\lambda^7).
\end{align*}
The new matrix $P'_T$ obtained from $P_T$ with the transformations above is
$$
P'_T=\begin{bmatrix}
C_7+C_3(\lambda^7)&C_6+C_3(\lambda^6)& C_5+C_3(\lambda^5) & C_4+C_3(\lambda^4) & C_3    
\end{bmatrix}_{15 \times 15},
$$
and we denote its determinant by $f':=\det P'_T$. Since $P_T'$ is obtained from $P_T$ via elementary operations on the columns, we have that $f=\det P_T=\det P'_T=f'$, where we notice that $f=f(c_{\mu,\nu})$ while $f'=f'(c_{\mu,\nu},\lambda^k)$. 

Looking at the partial derivatives of $f$ and $f'$ with respect to $\lambda^k$'s, since $f$ does not depend on those variables, we have
$$
0=\frac{\partial f}{\partial \lambda^j_{\alpha}}=\frac{\partial f'}{\partial \lambda^j_{\alpha}}=\sum_{\beta}\frac{\partial f}{\partial c_{\alpha+\beta}}c_{\beta}.
$$ 
For instance, if $j=4,$ we have two equations \begin{align*}
0&=\frac{\partial f}{\partial \lambda_{1,0}^4}=\frac{\partial f'}{\partial \lambda_{1,0}^4}=c_{3,0}\frac{\partial f}{\partial c_{4,0}}+c_{2,1}\frac{\partial f}{\partial c_{3,1}}+c_{1,2}\frac{\partial f}{\partial c_{2,2}}+c_{0,3}\frac{\partial f}{\partial c_{1,3}}+c_{2,0}\frac{\partial f}{\partial c_{3,0}}+c_{1,1}\frac{\partial f}{\partial c_{2,1}}+c_{0,2}\frac{\partial f}{\partial c_{1,2}} \\ 0&=\frac{\partial f}{\partial \lambda_{0,1}^4}=\frac{\partial f'}{\partial \lambda_{0,1}^4}=c_{3,0}\frac{\partial f}{\partial c_{3,1}}+c_{2,1}\frac{\partial f}{\partial c_{2,2}}+c_{1,2}\frac{\partial f}{\partial c_{1,3}}+c_{0,3}\frac{\partial f}{\partial c_{0,4}}+c_{2,0}\frac{\partial f}{\partial c_{2,1}}+c_{1,1}\frac{\partial f}{\partial c_{1,2}}+c_{0,2}\frac{\partial f}{\partial c_{0,3}}   
\end{align*} from which we obtain a matrix \begin{align}
M_4=\begin{bmatrix} \label{eq: eqs}
f_{4,0} & f_{3,1} & f_{2,2} & f_{1,3} & f_{3,0} & f_{2,1} & f_{1,2} \\
f_{3,1} & f_{2,2} & f_{1,3} & f_{0,4} & f_{2,1} & f_{12} & f_{0,3}
\end{bmatrix}.    
\end{align} By doing the same process for $j=5,6,7,$ we obtain a bigger matrix \begin{align*}
\mathcal{M}=\begin{bmatrix}
M_7 \\
M_6 \\
M_5 \\
M_4
\end{bmatrix}_{14 \times 7}   
\end{align*} that operates in the vector $\bm{c}=(c_{3,0},c_{2,1},c_{1,2},c_{0,3},c_{2,0},c_{1,1},c_{0,2}).$ 
The equations coming from the partial derivatives such as in \cref{eq: eqs} read as the linear system $\mathcal M \cdot \bm{c}=0$ which is a non-trivial polynomial identity. Since $\mathcal{T}^2_{5,4,7} = V(f)$ is a hypersurface,  from Remark \ref{remark: the pade of the example is nonsingular},  $f = \det(P_T)$ is an irreducible polynomial and
$f$ is not identically zero and its gradient 
$(f_\gamma)$ is nonzero at a generic point 
of $\mathcal{T}^2_{5,4,7}$, so in particular $\mathcal M$ is not the zero matrix and its rank is at least $1$ at a general point. On the other hand, we just observed that $\mathrm{rk} (\mathcal M)<7$, since it has non-zero kernel. Hence,  
the maximal minors of $\mathcal{M}$ vanish, which means that the image of the polar map \begin{align*}
\omega \mapsto (f_{\gamma})_{c_{\gamma} \in \omega} ,   
\end{align*} (where $\omega$ is the vector of coefficients $c_{\gamma}$ of $T$) is contained in $V(I_{7 \times 7}(\mathcal{M})),$ where $I_{7 \times 7}(\mathcal{M})$ denotes the ideal of maximal minors of $\mathcal{M}.$ In particular, the first-order partial derivatives of $f$ satisfy a polynomial relation. This is equivalent to having $h_f\equiv 0$ (see \cite[\S 7.2.1]{R16}).

\begin{remark}
Opposed to \cite[Theorem 4.4]{TV23}, the modifications made in $P_T$ as described above do not give a a well defined action, since a coefficient $c_{\gamma}$ may appear in more than one block of $P_T$ and be replaced by different combinations in each block. It happens because the blocks $C_j$ in our example (and more generally for $m-d=2$) contain coefficients $c_{\gamma}$ of two different degrees whereas in \cite[Theorem 4.4]{TV23} the blocks have coefficients corresponding to a unique fixed degree. Nevertheless, the modifications are still well defined as elementary operations in the columns of the matrices and are still effective for our purposes (see the forthcoming \Cref{lem:invariance}).
\end{remark}

\section{Main result}\label{section: main theorem} 
The purpose of this section is to prove our main result. We start by proving a necessary condition for a Taylor variety to be a hypersurface.
\begin{proposition}\label{nonsingular}
Let $\mathcal{T}^{n}_{d,e,m} \subset \mathbb{P}^N$ be a Taylor variety. If $\mathcal{T}^{n}_{d,e,m}$ is a non-defective hypersurface, then the associated Padé matrix $P_T$ is non-singular, i.e., $\det(P_T) \not\equiv 0$.
\end{proposition}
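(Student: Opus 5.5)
The plan is to first show that $P_T$ is square, and then argue by contradiction that its determinant cannot vanish identically.

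\emph{Step 1: $P_T$ is square.} Non-defectiveness together with being a hypersurface gives
\[
\dim \tv^n_{d,e,m}=\ed{\tv^n_{d,e,m}}=N-1,\qquad N={m+n\choose n}-1 .
\]
Since $\tv^n_{d,e,m}\neq\PP^N$, the minimum in the formula for the expected dimension is attained by the first term, so ${d+n\choose n}+{e+n\choose n}-2=N-1$. Rearranging,
\[
{m+n\choose n}-{d+n\choose n}={e+n\choose n},
\]
which is exactly the condition for $P_T$ to be square, as recalled in \Cref{subsection: pade marix}.

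\emph{Step 2: set up the contradiction.} Suppose $\det(P_T)\equiv 0$. Then for every $T\in\C[x]_{\leq m}$ the map $\varphi_T$ has a nontrivial kernel. By the characterization in \Cref{subsection: pade marix}, for every $T$ there is a nonzero $Q$ with $\deg Q\le e$ such that $QT$ has no monomials of degree in $\{d+1,\dots,m\}$. If $Q(0)\neq 0$, we may normalize $Q(0)=1$. Setting $P$ to be the truncation of $QT$ in degrees $\le d$ then gives a rational function $P/Q$ whose Taylor polynomial of order $m$ is $T$, at least when $T(0)=1$. Hence a dense set of points of the affine chart would lie in $\mathrm{Im}(\psi)$, forcing $\tv^n_{d,e,m}=\PP^N$. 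This contradicts the assumption that it is a hypersurface.

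\emph{Step 3: the main obstacle, the case $Q(0)=0$.} The difficulty is that the kernel vector might always satisfy $Q(0)=0$; this is exactly why the reduced Padé matrix $\hat P_T$ appears. I would handle it by a dimension count on the incidence variety
\[
\{(T,[Q]) : \varphi_T(Q)=0\}.
\]
The idea is to show that the locus of $T$ admitting a kernel vector with $Q(0)=0$ is a proper closed subset, so that for general $T$ some kernel vector has $Q(0)\neq 0$.

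I expect this to be the hardest step. My first attempt would be a fiberwise count. For a fixed $Q$ with $Q(0)=0$, compute the dimension of the linear space of $T$ with $QT|_{M_{d+1,m}}=0$. Then sum over $[Q]$, which ranges in a projective space of dimension ${e+n\choose n}-2$, and show the total is below $N$.

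If that count is not clean, I would fall back on a more direct route. Namely, I would exhibit a single explicit $T$, for instance $T$ equal to the truncation of a generic $P/Q$, for which $\det P_T\neq 0$. This could be checked by observing that the kernel is then spanned by that $Q$ alone, and using the assumed generic maximal rank of $P_T$ in the non-defective case.

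Either way, once the general $T$ has a kernel vector with $Q(0)\ne0$, Step 2 shows that $\mathrm{Im}(\psi)$ is dense, and this contradiction proves $\det(P_T)\not\equiv 0$.
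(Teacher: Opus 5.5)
Your Steps 1 and 2 are correct, and you have identified the real issue: if $\det P_T\equiv 0$, you must rule out that for general $T$ every kernel vector of $\varphi_T$ has $Q(0)=0$, i.e.\ that the reduced Pad\'e matrix $\hat P_T$ is generically rank-deficient. But the whole proof lives in Step 3, and neither of your plans closes it. Your fiberwise count over $[Q]$ with $Q(0)=0$ depends only on $(n,d,e,m)$ and never uses non-defectiveness. Without that hypothesis the claim you want is false. For $\tv^3_{2,2,3}\subset\PP^{19}$ the Pad\'e matrix is square ($10\times 10$), yet $Q=T_1-T_2$ satisfies $(QT)_3=T_1T_2-T_2T_1=0$ for every $T$. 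So $\det P_T\equiv 0$ and the $Q(0)=0$ locus is all of $\PP^{19}$, while $\dim\tv^3_{2,2,3}=17$. Concretely, your count returns exactly $N$ there: for generic $Q=Q_1+Q_2$, the $T$ with $Q_1T_2+Q_2T_1=0$ form a linear space of projective dimension $11$, and such $[Q]$ form an $8$-dimensional family. The fallback does not help either. It invokes the generic maximal rank that is to be proved. Moreover, any truncation $T$ of some $P/Q$ has $Q\in\ker\varphi_T$, hence $\det P_T(T)=0$, so no such $T$ can witness $\det P_T\neq 0$.

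The missing idea is to use non-defectiveness as generic finiteness of $\psi$. By Step 1 the source of $\psi$ has dimension $\binom{d+n}{n}+\binom{e+n}{n}-2=N-1=\dim\tv$, so the general fibre of $\psi$ is finite. For $T_0\in\mathrm{Im}(\psi)$, the fibre $\psi^{-1}(T_0)$ is identified via $(P,Q)\mapsto Q$ with $\{Q\in\ker\varphi_{T_0} : Q(0)=1\}$, since $P=(QT_0)_{\le d}$ is forced. This is an affine space of dimension $\dim\ker\varphi_{T_0}-1$. Hence at a general $T_0\in\mathrm{Im}(\psi)$ the kernel is a line spanned by some $Q_0$ with $Q_0(0)=1$, so $\ker\hat P_{T_0}=0$. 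Full column rank of $\hat P_T$ is an open condition, so $\ker\varphi_T\cap\{Q(0)=0\}=0$ for general $T$. If $\det P_T\equiv 0$, a general $T$ therefore has a kernel vector with $Q(0)\neq 0$, and your Step 2 gives $\tv=\PP^N$, a contradiction. This is exactly the step that excludes $\tv^3_{2,2,3}$. The paper's own proof simply asserts that $\det P_T\equiv 0$ is incompatible with $\tv$ being a hypersurface and does not address the $Q(0)=0$ issue, so an argument of this kind is what is actually needed.
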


\begin{proof}
By definition, a non-defective hypersurface in $\mathbb{P}^N$ has dimension $N-1$ and it is the zero set of a single irreducible polynomial $f \in \mathbb{C}[c_0, \dots, c_N]$. In the study of Taylor varieties, the conditions for a point to lie on $\mathcal{T}^{n}_{d,e,m}$ are given by the vanishing of the determinant of the Padé matrix $P_T$, which encodes the syzygies relations of the Taylor polynomial. 

If $P_T$ were singular for a general choice of coefficients, then $\det(P_T)$ would be the zero polynomial. This would imply that the variety has codimension greater than one, contradicting the hypothesis that $\mathcal{T}^{n}_{d,e,m}$ is a hypersurface. Therefore, the non-singularity of $P_T$ is a necessary condition for the variety to be a hypersurface whose defining equation is $f = \det(P_T)$.
\end{proof}

The following lemma provides a technical construction that will be used to prove our main theorem.
\begin{lemma} \label{lem:invariance}
Let $P_T$ be the Padé matrix associated with the Taylor variety $\mathcal{T}^{n}_{d,e,m}$ and let $f = \det(P_T)$ be the polynomial defining the hypersurface. Let $C_j$ denote the column blocks of $P_T$. If a new matrix $P_T'$ is constructed by replacing each column $C_j^i$ (for $j > d-e+2$) of the block $C_j$ with the linear combination
\begin{equation}
    C_j^i \mapsto C_j^i + \sum_{|\alpha|=j} \lambda_{\alpha}^j C_{d-e+2}^\alpha
\end{equation}
    where $\lambda_{\alpha}^j$ are independent auxiliary variables, 
    then:
\begin{equation}
    \det(P_T') = \det(P_T) = f.
\end{equation}
Consequently, the determinant $f$ is invariant with respect to the variables $\lambda_{\alpha}^j$.
\end{lemma}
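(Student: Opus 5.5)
The proof rests on multilinearity of the determinant in the columns, together with the fact that the column being modified is never used as a source for other modifications. The target block $C_{d-e+2}$ is the last block of $P_T$; it corresponds to the monomials of lowest degree in the domain $\C[x]_{\leq e}$. In the example $\tv^2_{5,4,7}$ this is $C_3$, placed in the rightmost five columns. The transformations only change columns of the blocks $C_j$ with $j>d-e+2$. Each such change adds a linear combination of columns of $C_{d-e+2}$, with coefficients $\lambda^j_\alpha$ that are independent of the entries $c_\gamma$ and of one another. The key point is that $C_{d-e+2}$ itself is left untouched. So the columns that serve as sources are exactly the original columns of $P_T$, and this holds whatever order we perform the replacements in.

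The steps are as follows.
\begin{enumerate}
\item Write $P_T'=P_T\cdot U$, where $U$ is a square matrix of the same size as $P_T$, indexed by the monomials of $\C[x]_{\leq e}$.
\item Describe $U$. It has $1$'s on the diagonal. Its only other nonzero entries lie in the rows indexed by the columns of $C_{d-e+2}$ and in the columns indexed by the columns of $C_j$, $j>d-e+2$. There the entry equals the appropriate $\lambda^j_\alpha$ or $0$, following the shifted (Toeplitz-like) pattern $C_{d-e+2}(\lambda^j)$ displayed in \Cref{subsection: example}.
\item Order the columns by blocks as in $P_T$. Then $U$ is block upper (or lower) unitriangular: the block indexed by ($C_{d-e+2}$-rows, $C_j$-columns) is $C_{d-e+2}(\lambda^j)$, and every other off-diagonal block is zero.
\item Conclude that $\det U=1$, since a block-triangular matrix with identity diagonal blocks has determinant $1$.
\item Hence $\det P_T'=\det P_T\det U=\det P_T=f$, as an identity in $\C[c_\gamma,\lambda^j_\alpha]$.
\item Since $f$ does not involve the $\lambda$'s, $\partial f'/\partial\lambda^j_\alpha=0$, which is the claimed invariance.
\end{enumerate}

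Alternatively, one can argue directly by multilinearity. Expand $\det P_T'$ column by column: every extra term contains a repeated column of $C_{d-e+2}$, or a column of $C_{d-e+2}$ appearing twice, and so vanishes.

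The main obstacle is bookkeeping rather than any real difficulty. We must check that the column indices $C_{d-e+2}^\alpha$ used in each modification are genuine columns of the block. That is, for each column $C_j^i$ the shifted vector $[\mathbf 0_{i-1}\ \lambda^j\ \mathbf 0]$ must fit within the width of $C_{d-e+2}$; this is a dimension count $i-1+\binom{d_j+n-1}{n-1}\le$ the width of $C_{d-e+2}$. If some index fell outside the block, we would simply take those coefficients to be zero, and the argument would be unaffected.

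We must also make sure that no source column is itself modified. This is immediate from the hypothesis $j>d-e+2$.

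Finally, we need the fact that the same $c_\gamma$ may appear in several blocks, as noted in the Remark preceding the statement. This is harmless, because we work with column operations on the matrix, not with substitutions of variables. The identity $P_T'=P_TU$ holds entrywise regardless of such repetitions.
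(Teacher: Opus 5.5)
Your proof is correct and follows the paper's argument. The modifications are elementary column operations that add combinations of columns of $C_{d-e+2}$, a block that is never itself modified, so the determinant is unchanged. Your factorization $P_T'=P_T U$ with $U$ unipotent is a more explicit formalization of this, and it makes visible the fact (left implicit in the paper) that the source columns stay untouched when all the operations are performed at once.
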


\begin{proof}
The determinant of a matrix is an alternating multilinear function of its columns. A fundamental property of the determinant is that it remains unchanged if a linear combination of other columns is added to a specific column (elementary column operations).

In the construction of $P_T'$, we add to the columns of the blocks $C_j$ (where $j > d-e+2$) a linear combination of the columns belonging to the block $C_{d-e+2}$. Since the columns of $C_{d-e+2}$ are already present in the original matrix $P_T$ in distinct positions, this operation preserves the value of the determinant for any value of the coefficients $\lambda_{\alpha}^j$. 
\end{proof}

\begin{remark}
We observe that the resulting polynomial $\det(P_T')$ obtained as in \Cref{lem:invariance} cannot explicitly contain the terms $\lambda_{\alpha}^j$. Formally, the invariance $\det(P'_T) = \det(P_T) = f$ implies $\frac{\partial f}{\partial \lambda^j_\alpha} = 0$,
which is a polynomial identity of linear relations of the form $\mathcal M \cdot \mathbf{c} = 0$, as we shall see in detail  in the proof of our main \Cref{theorem: main}. This will give us an algebraic relation among the partial derivatives of $f$ and help us to conclude that $h_f\equiv 0$.
 \end{remark}

We are now ready to prove our main \Cref{theorem: main}. For convenience we restate it below. We also refer to the running example of \Cref{subsection: example} which may be of help in illustrating the steps of the proof. 

\begingroup
\def\thetheorem{\ref{theorem: main}}
\begin{theorem}
Fix $d, e \in \mathbb{N}$. If $\mathcal{T}^2_{d,e,d+2} = V(\det(P_T))$  
 and it is a non-defective hypersurface, 
then the Hessian satisfies $h_f \equiv 0$.
\end{theorem}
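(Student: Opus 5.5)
We generalize the argument of \Cref{subsection: example}. Write $m=d+2$ and $T=T_0+\cdots+T_m$ for the decomposition into homogeneous components. The target of $\varphi_T$ is spanned by monomials of degrees $d+1$ and $d+2$, which gives $(d+2)+(d+3)=2d+5$ rows. The square hypothesis forces $\binom{e+2}{2}=2d+5$. Since $Q\in\C[x]_{\le e}$ has components of degrees $0,\dots,e$, the column blocks are $C_j$, one for each homogeneous piece $x^{\beta}$ of $Q$ with $|\beta|=m-j$. Each block is the matrix of multiplication by $T_{j}$ and $T_{j-1}$ into degrees $d+2$ and $d+1$, and it has $m-j+1$ columns for $j=m-e,\dots,m$. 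We set $k:=m-e=d-e+2$. The block $C_k$, with $e+1$ columns, is then the matrix of $Q_e\mapsto (T_k Q_e+T_{k-1}Q_e)$ restricted to degrees $d+2,d+1$.

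\textbf{Step 1 (elementary operations).} For each $j>k$ we introduce auxiliary variables $\lambda^j_\alpha$ with $|\alpha|=j-k$. The $i$-th column of $C_j$ corresponds to a monomial $x^\beta$ with $|\beta|=m-j$. We replace it by $C_j^\beta+\sum_{\alpha}\lambda^j_\alpha C_k^{\alpha+\beta}$, which is legitimate since $|\alpha+\beta|=e$. By \Cref{lem:invariance}, $\det P_T'=f$ identically, so $\partial f'/\partial\lambda^j_\alpha=0$.

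\textbf{Step 2 (computing the derivative).} The key point is the interpretation of the modified columns. Column $\beta$ of $C_j$ represents $x^\beta T$ truncated to degrees $d+1,d+2$. Column $\alpha+\beta$ of $C_k$ represents $x^{\alpha+\beta}T$ truncated in the same way, and only $T_k,T_{k-1}$ contribute there. Hence the perturbation of the entry of $P_T$ in row $x^\mu$ and column $x^\beta$ equals $\sum_\alpha \lambda^j_\alpha c_{\mu-\alpha-\beta}$, restricted to the coefficients $c_\gamma$ with $|\gamma|\in\{k,k-1\}$. This says that every occurrence of $c_{\gamma'}$ with $|\gamma'|\in\{j,j-1\}$ is shifted by $\sum_\alpha\lambda^j_\alpha c_{\gamma'-\alpha}$. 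These shifts are not a well-defined substitution on the $c$'s, because a given $c$ appears in two blocks. We therefore differentiate the determinant entrywise with the chain rule, using the cofactor expansion: $\partial f/\partial c_\gamma=\sum_{\text{entries }=c_\gamma}\mathrm{cof}$. The obstacle is to check that, after setting $\lambda=0$, the identity $0=\partial f'/\partial\lambda^j_\alpha$ can be rewritten as
\[
0=\sum_{|\beta|\in\{k,k-1\}} c_\beta\, \Phi^j_{\alpha+\beta},
\]
where $\Phi^j_{\alpha+\beta}$ is the sum of cofactors over the entries in block $C_j$ equal to $c_{\alpha+\beta}$. What I would try first is to avoid identifying $\Phi$ with $f_{\alpha+\beta}$. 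Instead I would observe that we only need \emph{some} algebraic relation among the partials. The coefficients $c_\gamma$ with $|\gamma|=m$ occur only in $C_m$, and those with $|\gamma|=k-1$ occur only in $C_k$; partials with respect to the remaining variables split as sums over at most two blocks. To handle this I would choose $j=m$ and $j=m-1$ together with suitable combinations. Equivalently, I would replace $C_j$ by the modified block $C_j+C_k(\lambda^j)$ simultaneously for all $j$, and sum the resulting identities, so that every cofactor sum collects into a full partial $f_\gamma$, as happens in the example for $M_4,\dots,M_7$.

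\textbf{Step 3 (conclusion).} Assembling the relations gives a polynomial matrix $\mathcal M$ whose entries are partial derivatives $f_\gamma$ with $k\le|\gamma|\le m$. Its size is $\bigl(\sum_{j>k}(j-k+1)\bigr)\times(2k+1)$ and it satisfies $\mathcal M\mathbf c=0$ with $\mathbf c=(c_\beta)_{|\beta|\in\{k,k-1\}}\neq0$. The number of rows is at least $2k+1$; this check holds once $e\ge2$, and I would treat small cases separately, where $k$ may be $\le1$ or negative, making $C_k$ involve $c_0$. Consequently every maximal minor of $\mathcal M$, which is a polynomial in the $f_\gamma$, vanishes identically. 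By \Cref{nonsingular}, $f\not\equiv0$ and $f$ is irreducible, so $\nabla f\ne0$ generically and $\mathcal M\not\equiv0$. It then remains to make sure that some maximal minor is a nonzero polynomial in the variables $z_\gamma$ after substituting $z_\gamma=f_\gamma$. This holds because the entries of $\mathcal M$ form a generic Hankel-type pattern in independent symbols, so the minors are nonzero polynomials. They then give a nontrivial algebraic relation among $f_0,\dots,f_N$, which by \cite[\S 7.2.1]{R16} is equivalent to $h_f\equiv0$. The main obstacle is Step 2, namely turning the block-wise cofactor sums into genuine partial derivatives.
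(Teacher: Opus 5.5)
\paragraph{Gap in Step 2.} Your Step 2 is a genuine gap, and the remedy you sketch cannot close it. Differentiating $\det P'_T$ in $\lambda^j_\alpha$ at $\lambda=0$ gives exactly $\sum_\gamma c_{\gamma-\alpha}\Phi^j_\gamma=0$, where $\Phi^j_\gamma$ is the cofactor sum over the positions of $c_\gamma$ inside $C_j$. However, $f_\gamma=\Phi^j_\gamma+\Phi^{j+1}_\gamma$ for $|\gamma|=j$, and $f_\gamma=\Phi^j_\gamma+\Phi^{j-1}_\gamma$ for $|\gamma|=j-1$, because $c_\gamma$ also occurs in the bottom half of $C_{j+1}$ (resp.\ the top half of $C_{j-1}$).

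Summing over $j$ does not repair this. In the identities coming from $C_{j+1}$, the sums $\Phi^{j+1}_\gamma$ with $|\gamma|=j$ are multiplied by coefficients $c$ of degree $k-1$. The target $c_{\gamma-\alpha}f_\gamma$ needs them multiplied by the degree-$k$ coefficient $c_{\gamma-\alpha}$. The example in \Cref{subsection: example} does not show cofactor sums collecting either; it simply asserts the identity.

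More decisively, the full-partial relation is not a consequence of the column operations at all. Take the one-variable analogue ($n=1$, $e=1$, $m=d+2$), where $P_T=\begin{pmatrix}c_{d+2}&c_{d+1}\\ c_{d+1}&c_d\end{pmatrix}$. The column operation yields the true identity $c_{d+1}\Phi_{d+2}+c_d\Phi_{d+1}=c_{d+1}c_d-c_dc_{d+1}=0$. Yet $c_{d+1}f_{d+2}+c_df_{d+1}=-c_dc_{d+1}\neq0$.

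The paper's proof writes down the full-partial relation \eqref{eq:relations} directly ``by the chain rule''. That treats the column operations as a substitution in the $c_\gamma$, which the Remark at the end of \Cref{subsection: example} concedes they are not. So your suspicion is justified. Neither your version nor the paper's version of the $\lambda$-argument yields, as it stands, a relation among the genuine partials $f_\gamma$. Consequently your Step 3 has nothing to act on, even though it correctly asks for a maximal minor that is nonzero as a polynomial in the $z_\gamma$.

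\paragraph{What actually proves the statement as posed.} The statement as literally posed follows from an observation that neither you nor the paper uses.
\begin{itemize}
\item Every entry of $P_T$ is some $c_{\rho-\sigma}$ with $|\rho|\ge d+1$ and $|\sigma|\le e$, so it has degree $\ge d-e+1$.
\item Squareness, $\binom{e+2}{2}=2d+5$, forces $e\ge4$ and $d=\frac{(e+1)(e+2)-10}{4}$, hence $d-e+1=\frac{e^2-e-4}{4}\ge2$.
\item Thus $f=\det P_T$ does not involve $c_{0,0},c_{1,0},c_{0,1}$. This contradicts the paper's assertion that $f$ depends on all $c_\gamma$ with $|\gamma|\le d+2$.
\item The corresponding rows of the Hessian matrix are therefore zero, and $h_f\equiv0$ simply because $V(f)\subset\PP^{\binom{d+4}{2}-1}$ is a cone.
\end{itemize}
This also makes your ``small cases'' worry vacuous, since $k=d-e+2\ge3$ always. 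The substantive claim is the vanishing Hessian of $f$ as a polynomial in the variables it actually involves, namely $c_\gamma$ with $d-e+1\le|\gamma|\le d+2$. That is exactly where your Step 2 would be needed, and it remains unproved by your argument.
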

\addtocounter{theorem}{-1}
\endgroup

\begin{proof}
We work with $n=2$ variables $x,y$ and set $m = d+2$. 
Write the Taylor polynomial in terms of its homogeneous components as
$$
T = \sum_{|\gamma| \leq d+2} c_\gamma x^\gamma = T_0 + T_1 + \cdots + T_{d+2},
$$
where $T_i = \sum_{|\gamma|=i} c_\gamma x^\gamma$.
By assumption, the Pad\'e matrix  has the following form where for the domain of $\varphi_T$ we considered the monomial basis in increasing order of degrees and each degree is in decreasing order, while on the target space we considered the monomial basis in decreasing order:

\begin{align*}
P_T = \left( \begin{array}{ccccccccccc} 
c_{d+2,0} & c_{d+1,0} & 0 & c_{d,0} & 0 & 0 & \dots & \dots & c_{d-e+2,0} & \dots & 0 \\
c_{d+1,1} & c_{d,1} & c_{d+1,0} & c_{d-1,1} & c_{d,0} & 0 & \dots & \dots & c_{d-e+1,1} & \dots & \vdots\\
c_{d,2} & c_{d-1,2} & c_{d,1} & \vdots & c_{d-1,1} & c_{d,0} & \ddots & \ddots & \vdots & \ddots & \vdots \\
\vdots & \vdots & \vdots & \vdots & \vdots & c_{d-1,1} & \ddots & \ddots & c_{0,d-e+2} & \ddots & 0 \\
\vdots & \vdots & \vdots & \vdots & \vdots & \vdots & \dots & \dots & 0& \ddots & c_{d-e+2,0} \\
\vdots & \vdots & \vdots & c_{0,d} & \vdots & \vdots & \dots & \dots & 0& \ddots & \vdots \\
c_{1,d+1}  & c_{0,d+1} & c_{1,d} & 0 & c_{0,d} & \vdots & \dots & \dots & \vdots & \dots & \vdots \\
c_{0,d+2} & 0 & c_{0,d+1} & 0 & 0 & c_{0,d} & \dots & \dots & 0 & \dots & c_{0,d-e+2} \\
c_{d+1,0} & c_{d,0} & 0 & c_{d-1,0} & 0 & 0 & \dots & \dots & c_{d-e+1,0} & \dots & 0 \\
c_{d,1} & c_{d-1,1} & c_{d,0} & c_{d-2,1} & c_{d-1,0} & 0 & \dots & \dots & c_{d-e,1} & \ddots & \vdots \\
\vdots & \vdots & \vdots & \vdots & \vdots & \vdots & \ddots & \ddots & \vdots & \ddots & 0 \\
\vdots & \vdots & \vdots & \vdots & \vdots & \vdots & \ddots & \ddots & c_{0,d-e+1} & \ddots & c_{d-e+1,0} \\
\vdots & \vdots & \vdots & c_{0,d-1} & \vdots & \vdots & \dots & \dots & 0 & \dots & \vdots \\
c_{1,d} & c_{0,d} & \vdots & 0 & c_{0,d-1} & \vdots & \dots & \dots & \vdots & \dots & \vdots \\
c_{0,d+1} & 0 & c_{0,d} & 0 & 0 & c_{0,d-1} & \dots & \dots & 0 & \dots & c_{0,d-e+1}\\
\multicolumn{1}{c}{\underbrace{\hphantom{c_{d+2,0}}}_{1}} &
\multicolumn{2}{c}{\underbrace{\hphantom{c_{d+1,0}\quad 0}}_{2}} &
\multicolumn{3}{c}{\underbrace{\hphantom{ \quad 0 \quad c_{d,0}\quad 0\quad 0}}_{3}} & 
\dots & \dots&
\multicolumn{3}{c}{\underbrace{\hphantom{\quad 0 \ c_{d-e+2,0}\quad \dots \quad 0\quad 0 }}_{e+1}}
\end{array} \right)
\end{align*}
or, equivalently, \[P_T = \left[C_{d+2}\quad  C_{d+1} \quad C_d \quad \cdots \quad C_{d-e+2}\right]\] 
and it is square of size $(2d+5) \times (2d+5)$, where $C_j$ denotes 
the matrix representing multiplication by $T_j$.
We set $f := \det(P_T)$. 
The rows of $P_T$ are indexed by monomials $x^\rho$ 
with $|\rho| \in \{d+1, d+2\}$, 
and the columns are indexed by monomials $x^\sigma$ 
with $|\sigma| \in \{0, 1, \ldots, e\}$. 
The $(\rho, \sigma)$ entry of the block $C_j$ equals 
$c_{\rho-\sigma}$ if $\sigma \leq \rho$ and $|\rho - \sigma| = j$ 
or $|\rho-\sigma| = j-1$, and zero otherwise.

We now introduce the following Column operations.
For each $j = d-e+3, \ldots, d+2$, set $d_j := j - (d-e+2)$, 
so that $d_j$ ranges from $1$ to $e$. 
Introduce new variables
$$
\lambda^j = (\lambda_{\alpha}^j)_{\alpha \in M_{d_j,d_j}}=(\lambda^j_\alpha)_{|\alpha| = d_j},
$$
indexed by monomials of degree $d_j$ in $x, y$, 
so $\lambda^j$ has $d_j + 1$ components. Notice that their sizes add to $e-d_j$ which add to $e+1$ with the size of $\lambda^j.$

For each such $j$ and for each $i = 1, \ldots, d-j+3$, 
replace the $i$-th column $C^i_j$ of the block $C_j$ by
$$
C^i_j \;\longmapsto\; C^i_j + C_{d-e+2}
\begin{pmatrix} \bm{0}_{i-1} \\ \lambda^j \\ \bm{0}_{e-d_j-i+1} \end{pmatrix},
$$
where $0_k$ denotes a zero vector of length $k$, 
so that the vector 
$(\bm{0}_{i-1},\, \lambda^j,\, \bm{0}_{e-d_j-i+1})^T$ 
has $e+1$ entries matching the number of columns of $C_{d-e+2}$.

Denote by $P'_T$ the matrix obtained from $P_T$ after performing all these column operations.
Since these are elementary column operations, by \Cref{lem:invariance}, we have
$$
f = \det(P_T) = \det(P'_T).
$$

Since $f = \det(P'_T)$ and $f$ does not depend 
on the auxiliary variables $\lambda^j_\alpha$, we have
$$
0 = \frac{\partial f}{\partial \lambda^j_\alpha} 
= \frac{\partial \det(P'_T)}{\partial \lambda^j_\alpha}
$$
for all $j = d-e+3, \ldots, d+2$ and all $\alpha$ with $|\alpha| = d_j$.

By multilinearity of the determinant in the columns of $P'_T$, 
the derivative $\frac{\partial \det(P'_T)}{\partial \lambda^j_\alpha}$ 
is computed by replacing the $i$-th column of block $C_j$ 
with the vector whose $\rho$-th entry is 
$(C_{d-e+2})_{\rho,\alpha} = c_{\rho-\alpha}$ 
(when $\alpha \leq \rho$ and $|\rho-\alpha| \in \{d-e+1,\, d-e+2\}$, 
zero otherwise), and taking the determinant. 
Expanding along this column and applying the chain rule 
to pass from cofactors to partial derivatives of $f$, 
evaluated at $\lambda^j = 0$, we obtain
\begin{equation}\label{eq:relations}
0 = \sum_{\substack{\beta \,:\, |\beta| \in \{d-e+1,\, d-e+2\} \\ 
|\alpha+\beta| \in \{j-1,j\}}} 
c_\beta \cdot f_{\alpha+\beta},
\end{equation} 
\noindent for every $j = d-e+3,\ldots, d+2$ and every $\alpha$ 
with $|\alpha| = d_j$, 
where $f_\gamma := \frac{\partial f}{\partial c_\gamma}$.

The two ranges $|\beta| = d-e+1$ (giving $|\alpha+\beta|=j-1$) 
and $|\beta| = d-e+2$ (giving $|\alpha+\beta| = j$) 
correspond precisely to the two groups of row indices of $P_T$, contributing to the block $C_j$. 
This is the reason both values of $|\beta|$ appear.

The next step of the proof focus on the introduction of the Hankel matrix $M$ and look at its kernel as in the proof of Theorem 4.4 in \cite{TV23}.
For each $j = d-e+3, \ldots, d+2$, 
define the matrix $M_j$ of size $(d_j+1) \times {(2d-2e+5)} $ by 
$$(M_j)_{\alpha,\beta} = f_{\alpha+\beta}$$

\noindent where rows are indexed by $\alpha$ with $|\alpha| = d_j$ 
and columns are indexed by $\beta$ 
with $|\beta| \in \{d-e+1, d-e+2\}$. 

Construct the following matrix
$$
\mathcal{M} := \begin{bmatrix}
M_{d+2} & \cdots & M_{d+3-e}   
\end{bmatrix}^T$$
which has $\binom{e+2}{2}-1$ rows 
and $2d-2e+5$ columns.  
The relation \eqref{eq:relations} reads exactly

$$
\mathcal{M} \cdot \mathbf{c} = 0,
$$
where $\mathbf{c} = (c_\gamma)_{|\gamma| \in \{d-e+2,d-e+1\}}$ as a polynomial identity. 

Since by our hypotheses $\mathcal{T}^2_{d,e,d+2} = V(f)$ is a hypersurface 
and $f = \det(P_T)$ is an irreducible polynomial, 
$f$ is not identically zero and its gradient 
$(f_\gamma)$ is nonzero at a generic point 
of $\mathcal{T}^2_{d,e,d+2}$. Notice that  this implies that $\mathcal M$ is not the zero matrix and its rank is at least $1$ at a general point. Moreover, the relation (\ref{eq:relations}) gives nontrivial kernel.

It remains to prove the vanishing of the Hessian. 

 Since the Pad\'e matrix is square, $2d+5={e+2 \choose 2}$ and therefore $2d-2e+5={e+2 \choose 2}-2e<{e+2 \choose 2}$. We have $\mathrm{rk}(\mathcal M)<\min\{\binom{e+2}{2}-1,2d-2e+5\} =2d-2e+5$ 
and at a general point,
\[
\dim \operatorname{Span}\{f_\gamma\} \leq\mathrm{rk}(\mathcal M) < 2d-2e+5.
\]
Notice that the polynomial $f = \det(P_T)$ depends on all 
coefficients $c_\gamma$ with $|\gamma| \leq d+2$, 
so the polar map takes values in 
$\mathbb{P}^{\binom{d+4}{2}-1}$. Therefore, if we denote by $\omega$ the vector of coefficients of $T$, the image of the polar map
$$
w \longmapsto (f_\gamma : c_\gamma \in w) 
$$
is contained in a proper
linear subspace $V(I_{(2d-2e+5)\times(2d-2e+5)}(\mathcal M))$ of $\PP^{|\omega| -1}=\mathbb{P}^{\binom{d+4}{2}-1}$ given by the vanishing of the maximal minors of $\mathcal M$ and in particular it is not dominant. Hence, we found a polynomial relation among the first-order partial derivatives of $f$ and this implies $ h_f\equiv 0$ (see \cite[\S 7.2.1]{R16}).
Equivalently, since the Hessian matrix $H_f$ is the Jacobian matrix 
of the polar map, its determinant satisfies
$$
h_f = \det(H_f) \equiv 0,
$$
which completes the proof.
\end{proof}

\section{Macaulay2 computations}\label{section: M2 code} 
In this part we include the \texttt{Macaulay2} \cite{M2} code that we used for our computations. The code runs on \texttt{Macaulay2 1.25}. The main function composing our code is \texttt{PadeMatrix}. It takes as input non-negative integers $d,e,m$ and it outputs the Pad\'e matrix $P_T$ associated with the map $\varphi_T$ for the particular case $n=2$. The Pad\'e matrix is constructed by considering monomial basis in increasing order for the domain and decreasing order on the image.
\begin{lstlisting}[language = code] 
PadeMatrix = (d,e,m) -> (
    L := toList flatten(
    apply(0..m, i -> toList apply(0..(m-i), j -> (i,j))));
    C := QQ[apply(L,x->c_x)]; 
    S := C[x,y];
    cv := toList apply(0..m,i->reverse(
        select(flatten entries vars C,
            x->sum toList((baseName x)#1)==i))
        );
    T := sum toList apply(0..m,i->(sub(matrix{cv_i},S)*transpose(basis(i,S)))_(0,0)); 
    mult := toList apply(0..e,i->apply(reverse flatten entries basis(i,S),x->x*T));  
    -- remove "reverse" to take monomials in each degree of the domain in decreasing order
    imMap := flatten mult; 
    s := toList((d+1)..m); 
    cols := matrix apply(s,y->apply(imMap,x->part({y,1},x))); 
    PadeMatrix := (coefficients cols)_1 
)
\end{lstlisting}
For instance, the Pad\'e matrices relative to \cite[Examples 3.3, 4.3, 4.5]{TV23} are obtained as follows after loading the 
\texttt{PadeMatrix} function.
\begin{lstlisting}[language = code]
PadeMatrix(2,1,3) -- [Example 3.3, CNOS23]  
PadeMatrix(1,1,2) -- [Example 4.3, CNOS23]
PadeMatrix(4,2,5) -- [Example 4.5, CNOS23]  
\end{lstlisting}
The Pad\'e matrix of \Cref{subsection: example} is obtained with the command \texttt{M = PadeMatrix(5,4,7)} where the command \texttt{reverse} in \texttt{mult} is removed. To ensure that $M$ is non-singular, we substitute random values to the entires of $M$ and verify that the corresponding determinant is non-zero (see \Cref{remark: the pade of the example is nonsingular}):
\begin{lstlisting}[language = code]
L = flatten entries vars coefficientRing ring M;
det sub(M,apply(L,i->i=>random(QQ)))    
\end{lstlisting}

\bibliographystyle{alpha}
\bibliography{references.bib}

@article{Franchetta1954,
  author  = {Franchetta, A.},
  title   = {Sulle forme algebriche di $S_4$ aventi l'hessiana indeterminata},
  journal = {Rendiconti di Matematica e delle sue Applicazioni},
  volume  = {13},
  year    = {1954},
  pages   = {1--6}
}

@article{CRS,
  title={Homaloidal hypersurfaces and hypersurfaces with vanishing Hessian},
  author={Ciliberto, C. and Russo, F. and Simis, A.},
  journal={Advances in Mathematics},
  volume={218},
  pages={1759--1805},
  year={2008}
}

@article{TV23,
  TITLE = {{Taylor Polynomials of Rational Functions}},
  AUTHOR = {Conca, A. and Naldi, S. and Ottaviani, G. and Sturmfels, B.},
  URL = {https://hal.science/hal-04526513},
  JOURNAL = {{Acta Mathematica Vietnamica}},
  PUBLISHER = {{Springer Singapore}},
  YEAR = {2023},
  MONTH = Nov,
  DOI = {10.1007/s40306-023-00514-4},
  HAL_ID = {hal-04526513},
  HAL_VERSION = {v1},
}

@article{GRS,
    author = {Gondim, R. and Russo, F. and Staglianò, G.},
    title = {Hypersurfaces with vanishing hessian via dual Cayley trick},
   journal={Journal of Pure and Applied Algebra},
  volume={224,3},
  pages={ 1215-1240},
  year={2020}
}

@article{GN,
  title={Ueber die algebraischen Formen, deren Hessesche Determinante identisch verschwindet},
  author={Gordon, P. and Noether, M.},
  journal={Mathematische Annalen},
  volume={10},
  pages={547--568},
  year={1876}
}

@article{Pe,
  title={ Sopra una forma cubica con 9 rette doppie dello spazio a cinque dimensioni, e i
correspondenti complessi cubici di rette nello spazio ordinario},
 author={Perazzo, U.},
  journal={Atti della Accademia delle Scienze di Torino},
  volume={136,4},
  pages={ 891–895},
  year={1901}
}

@article{Permutti1957,
  author  = {Permutti, R.},
  title   = {Su certe forme a hessiana indeterminata},
  journal = {Ricerche di Matematica},
  volume  = {6},
  year    = {1957},
  pages   = {3--10}
}

@article{Permutti1964,
  author  = {Permutti, R.},
  title   = {Su certe classi di forme a hessiana indeterminata},
  journal = {Ricerche di Matematica},
  volume  = {13},
  year    = {1964},
  pages   = {97--105}
}

@Misc{M2,
          author = { Grayson, D. R. and Stillman, M. E. },
          title = {Macaulay2, a software system for research in algebraic geometry},
          howpublished = {Available at \url{https://macaulay2.com/}}
        }

@book{R16,
    author = {Russo, F.},
    title = {On the Geometry of Some Special Projective Varieties},
    publisher = {Springer Cham},
    year = {2016},
    doi = {https://doi.org/10.1007/978-3-319-26765-4},
    series = {Lecture Notes of the Unione Matematica Italiana}
}

@book{Z,
    author = {Zac, F.L.},
    title = {Tangents and Secants of Algebraic Varieties},
    publisher = { American Mathematical Society},
    year = {1993 },
     series = { Translations of Mathematical Monographs, vol. 127}
}

@article{RP,
  title={Perazzo hypersurfaces and the weak Lefschetz property},
  author={Mirò-Roig, R. M. and Pérez, J.},
  journal={Journal of Algebra},
  volume={646},
  pages={357--375},
  year={2024}
}

@article{FMM,
    title={Perazzo 3-folds and the weak Lefschetz property},
    author={Fiorindo, L. and Mezzetti, E. and Miró-Roig, R.M.},
  journal={Journal of Algebra},
  volume={626},
  pages={56--81},
  year={2023}
}

@article{G,
  title={On higher Hessians and the Lefschetz properties},
  author={Gondim, R.},
  journal={Journal of Algebra},
  volume={489},
  pages={241--263},
  year={2017}
}

@article{GR,
        title = {On cubic hypersurfaces with vanishing hessian},
        author={Gondim, R. and Russo, F.},
journal={Journal of Pure and Applied Algebra},
 volume={219,4},
  pages={779--806},
  year={2015}
}

@article{MW,
  title={Lefschetz elements of artinian Gorenstein algebras and Hessians of homogeneous polynomials},
  author={Maeno, T. and Watanabe, J.},
  journal={Illinois J. Math.},
  volume={53},
  pages={-593--603},
  year={2009}
}

@article{W,
  title={ On the Theory of Gordan-Noether on Homogeneous Forms with Zero Hessian},
author={Watanabe,J},
journal={Proc. Sch. Sci. Tokai Univ.},
volume={49},
 pages={1--21},
year={2014}
 }

@book{WB,
    author = {de Bondt, M. and Watanabe, J.},
    title = {On the theory of Gordan-Noether on homogeneous forms with zero Hessian (improved version)},
    publisher = { Springer, Cham },
    year = {2020},
     series = { Springer Proc. Math. Stat., vol. 319, pp. 73--107}
     }
\Addresses
\end{document}